\documentclass[12pt,a4paper]{amsart}
\usepackage{tgtermes}

\usepackage[margin=2cm,a4paper]{geometry}
\usepackage[foot]{amsaddr}
\usepackage{enumitem}
\usepackage{lineno}
\usepackage{hyperref}
\hypersetup{
    colorlinks=true,
    citecolor=blue,
    linkcolor=blue,
    filecolor=blue,      
    urlcolor=blue,
    pdftitle={On the chromatic number of the union of comparability graphs},
    pdfauthor={Wouter Cames van Batenburg, Maria Chudnovsky, Linda Cook, James Davies, Seokbeom Kim, Sang-il Oum}
}
\usepackage{keytheorems}
\usepackage{zref-clever}
\zcsetup{cap=true}
\newkeytheorem{theorem}[name=Theorem,refname={Theorem,Theorems}, Refname={Theorem,Theorems}]
\newkeytheorem{lemma,proposition,corollary,conjecture,problem}[sibling=theorem]
\newcommand\abs[1]{\left\lvert #1\right\rvert}

\begin{document}
\title{On the chromatic number of 
the union of comparability~graphs}
\date{\today}
\author{Wouter Cames van Batenburg$^1$}
\address{$^1$D\'epartement d'Informatique, Universit\'e libre de Bruxelles, Belgium.}

\email{w.p.s.camesvanbatenburg@gmail.com}
\author{Maria Chudnovsky$^2$}
\address{$^2$Department of Mathematics, Princeton University, Princeton, USA.}
\email{mchudnov@math.princeton.edu}

\author{Linda Cook$^3$}
\address{$^3$Mathematics Institute, Utrecht University, Utrecht, the Netherlands.}
\email{l.j.cook@uu.nl}

\author{James Davies$^4$}
\address{$^4$Institute of Mathematics, Leipzig University, Leipzig, Germany.}
\email{jgdavies@uwaterloo.ca}

\author{Seokbeom Kim$^{6,5}$}
\email{seokbeom@kaist.ac.kr}

\author{Sang-il Oum$^{5,6}$}
\address{$^5$Discrete Mathematics Group, Institute for Basic Science (IBS), Daejeon,~South~Korea.}
\address{$^6$Department of Mathematical Sciences, KAIST, Daejeon, South Korea.}
\thanks{
WC was supported by the Belgian National Fund for Scientific Research (FNRS).
MC was supported by NSF Grant DMS-2348219, NSF Grant CCF-2505100, AFOSR grant FA9550-25-1-0275, and a Guggenheim Fellowship.
JD was supported by the Alexander von Humboldt Foundation in the framework of the Alexander von Humboldt Professorship of Daniel Král' endowed by the Federal Ministry of Education and Research.
SK and SO were supported by the Institute for Basic Science (IBS-R029-C1).
}
\email{sangil@ibs.re.kr}

\begin{abstract}
    Resolving in a strong sense a problem of Gy\'arf\'as on the union of two perfect graphs, we prove that for every pair of positive integers $d$ and $k$, there is a graph~$G$ with clique number $k$ and chromatic number $k^d$ that is the union of $d$ comparability graphs.
    We also show that the chromatic number can be replaced by the fractional chromatic number or $\frac{\abs{V(G)}}{\alpha(G)}$.
\end{abstract}
\maketitle

\section{Introduction}\label{sec:intro}
For a graph $G$, let us write $\omega(G)$ for the \emph{clique number}, that is, the maximum size of a clique in $G$, 
and $\chi(G)$ for the chromatic number of $G$.
A graph is \emph{perfect} if $\chi(H)=\omega(H)$ for all of its induced subgraphs $H$.
We say a graph $G$ is the \emph{union} of graphs $G_1, \ldots, G_d$
if $V(G)=V(G_1)\cup \cdots \cup V(G_d)$ and $E(G)
=E(G_1)\cup 
\cdots \cup E(G_d)$.
By taking a product coloring, if $G$ is the union of $d$ perfect graphs, 
then $\chi(G)\le (\omega(G))^d$.
This raises the natural problem of determining the optimal $\chi$-bounding function for the class of graphs that are unions of $d$ perfect graphs. For $d=2$, this is a problem of Gy\'arf\'as~\cite[Problem 5.3]{Gyarfas1987} from 1985.

A \emph{comparability graph} is a graph on a partially ordered set $(P,\preceq)$ where two vertices are adjacent if and only if they are comparable. 
By Dilworth's theorem~\cite{Dilworth1950}, comparability graphs are perfect.
There are natural geometric classes of disjointness graphs that are the union of comparability graphs~\cite{fox2008erdHos}. For instance, disjointness graphs of grounded $x$-monotone curves are the union of $2$ comparability graphs~\cite{PT2020}, disjointness graphs of arbitrary convex sets in the plane are the union of $4$ comparability graphs~\cite{larman1994ramsey}, and disjointness graphs of axis-aligned boxes in $\mathbb{R}^d$ are the union of $d$ comparability graphs~\cite{cozzens1983computing}.

We resolve Gy\'arf\'as' problem~\cite[Problem 5.3]{Gyarfas1987} in a strong sense by showing that $(\omega(G))^d$ is the optimal $\chi$-bounding function, even if $G$ is the union of $d$  comparability graphs.

\begin{theorem}[store=main]\label{thm:main}
    For positive integers $k$ and $d$, 
    there is a graph $G$ that is the union of $d$ comparability graphs such that 
    $\omega(G)=k$ and $\chi(G)=k^d$.
\end{theorem}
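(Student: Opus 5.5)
We will in fact aim for the stronger statement from the abstract, $\abs{V(G)}/\alpha(G)\ge k^d$. Since $\chi(G)\ge \abs{V(G)}/\alpha(G)$ always holds, and the product-coloring bound gives $\chi(G)\le \omega(G)^d=k^d$, this suffices. The plan is induction on $d$ via a product-type construction.

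\textbf{Setup.} Call $G$ \emph{good} if it is the union of $d$ comparability graphs, has $\omega(G)\le k$, and $\abs{V(G)}/\alpha(G)$ is close to $k^d$. Ratios only approach $k^d$ asymptotically, so we need exact equality $\chi=k^d$ from a ratio strictly larger than $k^d-1$, i.e.\ $\alpha(G)<\abs{V(G)}/(k^d-1)$. Then $\chi(G)\ge k^d$ because $\chi$ is an integer. We also need $\omega(G)=k$ exactly; this is easy, since $\omega\le k$ together with $\chi\ge k^d$ forces $\omega= k$ when $d=1$, and in general we can add a disjoint $k$-clique.

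\textbf{Base case and construction.} For $d=1$, take a chain of $k$ elements. For the inductive step, take a good $(d-1)$-graph $H$, realised by partial orders $\preceq_1,\dots,\preceq_{d-1}$ on $V(H)$, together with a large integer $N$. The new vertex set is $V(H)\times[N]$, or more flexibly a carefully chosen subset of it. For $i<d$, lift the order $\preceq_i$ lexicographically with a random or structured tie-breaking in the second coordinate. The $d$-th order is essentially built on the second coordinate, for instance a union of $N$ disjoint chains arranged as in a shift-graph or Ramsey-type construction. The design goals are:
\begin{itemize}
\item every clique of $G$ uses at most $k$ vertices overall, so cliques of $H$ cannot be blown up;
\item every independent set meets a typical "fibre" in only a $1/k$ fraction relative to $H$'s independence ratio.
\end{itemize}
A natural concrete choice is to take $G$ as a random subgraph, that is, random vertex subsets of an iterated lexicographic-type product. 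We then prove via a Ramsey or averaging argument that $\alpha(G)\le (1+o(1))\abs{V(G)}/k^d$.

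\textbf{Main obstacle.} The hardest step is keeping $\omega$ at $k$ rather than letting it grow, while still forcing the independence ratio to shrink by a factor of $k$ per dimension. The naive lexicographic product of comparability graphs multiplies $\chi$ but also multiplies $\omega$, so cliques must be prevented from combining across coordinates. My first attempt would be a Hales--Jewett or Ramsey-style argument: choose the $d$-th order with huge "girth-like" sparsity so that any $k+1$ vertices pairwise adjacent in the union would need $k+1$ pairwise comparable pairs in the new order that are forbidden by construction. The independence bound would then come from a density (Szemerédi-type or supersaturation) argument showing that any set of density above $1/k$ in the new coordinate contains a comparable pair within a fibre.
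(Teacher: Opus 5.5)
There is a genuine gap: the proposal never constructs the $d$ partial orders, and the step it leaves open is the whole theorem. Your base case is a $k$-chain, so the inductive step from $d=1$ to $d=2$ is already the full case $d=2$. That is Gy\'arf\'as' problem, where the best previous bound was $\binom{k+1}{2}$. Yet this step is described only through design goals. Hales--Jewett, Szemer\'edi-type density and supersaturation are invoked without any statement that would yield $\omega(G)\le k$ or $\alpha(G)<\abs{V(G)}/(k^d-1)$.

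The concrete ingredients you do name work against each other. If the tie-breaking relates vertices of the same fibre, each fibre $\{u\}\times[N]$ becomes a chain, hence an $N$-clique. Even without that, the lift ($(u,a)\prec_i(v,b)$ whenever $u\prec_i v$) reproduces every $k$-clique $C$ of $H$ for every choice of one representative per fibre. So the mechanism you propose for the independence bound, a comparable pair inside a fibre, creates a $(k+1)$-clique as soon as that fibre lies over a vertex of $C$. The same happens for $\preceq_d$-comparabilities across fibres over two non-adjacent vertices with a common $(k-1)$-clique of neighbours. Passing to a random vertex subset only thins these cliques out rather than destroying them, and you give no argument that deleting them keeps the independence ratio near $k^d$. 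You correctly identify keeping $\omega=k$ as the main obstacle, but the plan does not overcome it.

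The missing idea in the paper is not a product at all. Take $G_0$ of girth at least $3k-2$ with $\chi(G_0)=k^d$, and a proper colouring $\phi:V(G_0)\to[k]^d$. Give each edge $uv$ type $r$, the first coordinate where $\phi(u)$ and $\phi(v)$ differ, and orient it by increasing $r$-th coordinate. Let $\preceq_r$ be reachability along monotone paths of type-$r$ edges. Three consequences follow:
\begin{enumerate}
\item The $r$-th coordinate strictly increases along a chain, so $\omega(H_r)\le k$.
\item Every edge of $G_0$ lies in some $H_r$, so $\chi(G)\ge\chi(G_0)=k^d$, with no independence-number estimate at all.
\item Every $\preceq_r$-comparability is witnessed by a path of at most $k-1$ edges of $G_0$. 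So a triangle of $G$ whose edges are not all in one $H_r$ would produce a cycle of length at most $3k-3$ in $G_0$.
\end{enumerate}
Hence every clique of $G$ lies inside a single $H_r$. This is what reconciles transitivity with $\omega(G)=k$, and it is what your sketch lacks. Finally, aiming for $\abs{V(G)}/\alpha(G)$ makes the task harder than necessary. That is the content of the paper's strengthened second theorem, which additionally needs a high-girth graph $G_0$ with $\abs{V(G_0)}/\alpha(G_0)=k^d$ (Lin--Zhu, or the probabilistic construction in the appendix).
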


\zcref{thm:main} is tight and improves previous lower bound constructions;
Dumitrescu and T\'oth~\cite{DT2002} showed that there are such graphs $G$ with $
\chi(G)\ge (\omega(G))^{d(1-o(1))}$, and Pach and Tomon~\cite[Theorem 14]{PT2020} constructed unions of two comparability graphs with chromatic number $\binom{\omega(G)+1}{2}$.
For further Ramsey results on the union of comparability graphs, see \cite{bradavc2024clique,DT2002,fox2009bipartite,fox2026multipartite,KT2020,tomon2016turan}.

\zcref{thm:main} can be strengthened for 
\emph{fractional chromatic number} $\chi_f(G)$ and $\frac{\abs{V(G)}}{\alpha(G)}$, where $\alpha(G)$ denotes the maximum size of an independent set in~$G$.
Note that $\frac{\lvert V(G) \rvert}{\alpha(G)} \leq \chi_f(G) \leq \chi(G)$ in general.

\begin{theorem}\label{thm:main_Hallratio}
    For positive integers $k$ and $d$, there is a graph $G$ that is the union of $d$ comparability graphs such that $\omega(G) = k$ and $\frac{\lvert V(G) \rvert}{\alpha(G)}=\chi_f(G)=\chi(G) = k^d$.
\end{theorem}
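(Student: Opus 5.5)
The plan is to prove \zcref{thm:main_Hallratio} (which implies \zcref{thm:main}) by induction on $d$, carrying a weighted strengthening. Since $\chi(G)\le \omega(G)^d$ holds for any union of $d$ perfect graphs, and $\frac{\abs{V(G)}}{\alpha(G)}\le \chi_f(G)\le\chi(G)$, it suffices to build a union $G$ of $d$ comparability graphs with $\omega(G)=k$ and $\alpha(G)\le \abs{V(G)}/k^d$. I would first reduce to a weighted version. Replacing a vertex $v$ by an independent set of $m$ twins of $v$ preserves being a union of $d$ comparability graphs: in each poset, replace $v$ by an antichain of $m$ copies with the same relations as $v$. It also does not change $\omega$. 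Hence it is enough to find $G$ and a rational probability weighting $w$ on $V(G)$ such that every independent set $I$ satisfies $w(I)\le k^{-d}$. Blowing up by a common denominator then gives $\abs{V}/\alpha\ge k^d$.

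For $d=1$, take a $k$-element chain: its comparability graph is $K_k$, and the uniform weight gives every independent set weight $1/k$. For the inductive step, let $G'$, with posets $P_1,\dots,P_{d-1}$ and weight $w'$, satisfy $\omega(G')=k$ and $w'(I)\le k^{-(d-1)}$ for all independent $I$. I would build $G$ from many disjoint copies $G'_{s}$ of $G'$, indexed by a structured family of sequences $s$. Within each copy, the orders $P_1,\dots,P_{d-1}$ are kept. Copies are made pairwise incomparable in $P_1,\dots,P_{d-1}$, which is legitimate since a disjoint union of posets is a poset. A new poset $P_d$ relates vertices in different copies, and I would define it lexicographically, in the spirit of a shift-graph or Zykov-type iteration. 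Concretely, order the copies along $k$ "levels", and make $(x,s)<_{P_d}(y,t)$ when $s$ precedes $t$ in level order and $x,y$ stand in a prescribed relation, for instance both lie in the same colour class of a fixed optimal colouring of $G'$. This relation must be chosen so that $P_d$ is transitive and so that the weight of any independent set is forced down by a factor of $k$. The target averaging statement is this: an independent set of $G$ meets the $k$ copies along any $P_d$-chain of copies in parts whose $w'$-weights sum to at most $k^{-(d-1)}$ per copy on average over the $k$ levels, so its total weight is at most $k^{-d}$.

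The main obstacle I expect is keeping $\omega(G)=k$. A clique in $G$ can mix edges of $P_d$ between copies with edges of $P_1,\dots,P_{d-1}$ inside a copy. The natural bound is then only that a clique meets at most $k$ copies, with at most $k$ vertices in each, which is far too weak. To handle this I would make $P_d$-edges between two copies join only vertices that are pairwise non-adjacent in $G'$, for example vertices in the same colour class of a $k^{d-1}$-colouring of $G'$. Then any clique uses at most one vertex from each copy it meets and lies along a $P_d$-chain of copies. The remaining difficulty is to limit such chains to length $k$ while still getting the factor-$k$ gain in the independence bound. I would first try a nested levels construction, with $k$ levels and copies indexed by $[k]\times(\text{colour class})$, and check the two competing constraints directly. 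If that fails, I would fall back on a probabilistic or Ramsey-type choice of the index family.
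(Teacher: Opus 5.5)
\textbf{There is a genuine gap: the inductive step is never carried out, and the one concrete device you propose for it provably cannot work.}

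Your reduction is sound. Twin blow-ups preserve $\omega$ and the union-of-comparability-graphs structure, and by LP duality your weighted condition is equivalent to $\chi_f(G)\ge k^d$. The chain $|V(G)|/\alpha(G)\le\chi_f(G)\le\chi(G)\le\omega(G)^d$ then closes the argument.

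The problem is the inductive step, which is the entire content of the theorem. You never define $P_d$, and you end with a plan to try a nested-levels construction and otherwise something probabilistic. The concrete device you offer for keeping $\omega(G)=k$ is incompatible with the independence bound. Suppose:
\begin{itemize}
\item $V(G)=V(G')\times S$, and the copies are pairwise incomparable in $P_1,\dots,P_{d-1}$;
\item $P_d$ has height at most $k$;
\item $P_d$ relates $(x,s)$ and $(y,t)$ only if $s\ne t$ and $c(x)=c(y)$, where $c$ is a fixed $k^{d-1}$-colouring of $G'$ and $k,d\ge 2$.
\end{itemize}
Take any nonzero weighting $w$ of total weight $W$. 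Give each copy $s$ an independent uniformly random colour $\gamma(s)\in[k^{d-1}]$, and put $S_\gamma=\{s:\gamma(s)=\gamma\}$. Let $I_\gamma$ be a maximum-weight $P_d$-antichain in $c^{-1}(\gamma)\times S_\gamma$, and let $I=\bigcup_\gamma I_\gamma$.

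Then $I$ is independent in $G$: inside each copy it lies in one colour class, and $P_d$ only relates vertices of equal colour. By Mirsky's theorem, $w(I_\gamma)\ge w(c^{-1}(\gamma)\times S_\gamma)/k$. Each $(x,s)$ lies in $c^{-1}(c(x))\times S_{c(x)}$ with probability $k^{-(d-1)}$, so $\mathbb{E}\,w(I)\ge W/k^d$.

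This inequality is strict. If $w(c^{-1}(\gamma)\times\{s\})>0$, then with positive probability $S_\gamma=\{s\}$. In that case $c^{-1}(\gamma)\times\{s\}$ is itself a $P_d$-antichain and is taken in full. Hence $\chi_f(G)<k^d$, whatever $G'$ and $P_d$ are.

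The underlying issue is that your inductive hypothesis carries no sparsity information that could exclude cliques mixing $P_d$-edges with edges inside copies. That is what pushes you towards restricting which pairs $P_d$ may relate, and the restriction is fatal.

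\textbf{The missing idea: use one sparse graph instead of induction.}

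The paper obtains both properties at once from a single graph. It takes $G_0$ of girth at least $3k-2$ with $|V(G_0)|/\alpha(G_0)=\chi_f(G_0)=\chi(G_0)=k^d$. This is a strengthening of Erd\H{o}s' theorem due to Lin and Zhu, reproved in the appendix. It then fixes a colouring $\phi\colon V(G_0)\to[k]^d$. It sets $u\preceq_r v$ when some path of $G_0$ from $u$ to $v$ keeps the first $r-1$ coordinates of $\phi$ constant and strictly increases the $r$-th coordinate at each step.

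Each $\preceq_r$ has height at most $k$. The ends of every edge of $G_0$ are comparable in exactly one $\preceq_r$, so $G_0\subseteq G$, and $|V(G)|/\alpha(G)\ge k^d$ comes for free.

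For the clique bound, a triangle of $G$ not inside a single $H_r$ yields three monotone paths with at most $k-1$ edges each, one of them edge-disjoint from the other two. Their union contains a cycle of length at most $3k-3$ in $G_0$, contradicting the girth, so $\omega(G)=k$.

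The paper's graph does fit your template in one respect. The $k$ blocks of vertices sharing the first coordinate of $\phi$ are pairwise incomparable in $\preceq_2,\dots,\preceq_d$, and $\preceq_1$ is the order between them. However, the blocks are not copies of a fixed graph, and $\preceq_1$ relates vertices regardless of their remaining coordinates. The clique bound comes from girth, not from restricting which pairs the new order may relate.
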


\section{The construction}

For a positive integer $k$, we let $[k]=\{1, \ldots , k\}$.
The \emph{girth} of a graph is the length of its shortest cycle. 
Our construction uses graphs with arbitrarily large girth and chromatic number.
Erd{{\H{o}}}s~\cite{erdos59-high-girth-high-chromatic-graph} first showed that such graphs exist using the probabilistic method and Lov\'{a}sz \cite{lovasz-explicit} gave the first explicit construction.
For further explicit constructions, see \cite{noga-high-girth,bucic2025geometric,Davies2021Box,hypergraphs-high-chromatic-and-girth3,hypergraphs-high-chromatic-and-girth4,ramanujan,hypergraphs-high-chromatic-and-girth5}.

\begin{theorem}[Erd{{\H{o}}}s \cite{erdos59-high-girth-high-chromatic-graph}]\label{large_girth_large_chi}
    For every pair of positive integers $g$ and $k$, there exists a graph~$G$ of girth more than $g$ and with $\chi(G)=k$.
\end{theorem}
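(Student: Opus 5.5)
We use the probabilistic method in the form introduced by Erd\H{o}s: first produce a graph of girth more than $g$ and chromatic number \emph{at least} $k$, then trim it to chromatic number exactly $k$. The small cases are immediate. For $k=1$ take a single vertex, and for $k=2$ a single edge. Both are forests, so their girth is infinite and hence more than $g$. From now on assume $k\ge 3$.

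For the main step, fix $\theta$ with $0<\theta<1/g$ and let $G=G(n,p)$ with $p=n^{\theta-1}$, where $n$ is large.

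First, short cycles are rare. Let $X$ be the number of cycles of length at most $g$. Then
\[
\mathbb{E}[X]\le\sum_{\ell=3}^{g} n^{\ell}p^{\ell}=\sum_{\ell=3}^{g} n^{\theta\ell}=o(n),
\]
since $\theta g<1$. By Markov's inequality, $\Pr[X\ge n/2]\to 0$.

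Second, large independent sets are rare. Put $a=\lceil (3/p)\ln n\rceil$. Then
\[
\Pr[\alpha(G)\ge a]\le \binom{n}{a}(1-p)^{\binom a2}\le \bigl(n e^{-p(a-1)/2}\bigr)^{a}\to 0.
\]

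Hence for large $n$ there is an outcome with $X<n/2$ and $\alpha(G)<a$. Delete one vertex from each cycle of length at most $g$. The resulting graph $G'$ has girth more than $g$, at least $n/2$ vertices, and $\alpha(G')\le\alpha(G)<a$. Therefore
\[
\chi(G')\ge \frac{n/2}{a}\ge \frac{n^{\theta}}{6\ln n}(1-o(1)),
\]
which exceeds $k$ for large $n$. The one place needing care is the choice of $a$ in the independence bound: we need $na^{-1}\to\infty$, and the bound $ne^{-p(a-1)/2}\to 0$ must hold. Both follow from $p a\approx 3\ln n$ and $\theta>0$.

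To get chromatic number exactly $k$, delete vertices of $G'$ one at a time. Removing a single vertex lowers the chromatic number by at most one, since the removed vertex can be given a fresh colour. Deleting vertices never creates cycles, so girth never decreases. The chromatic number starts at $\chi(G')\ge k$ and ends at $1$ when a single vertex remains, and it changes by steps of at most one. So some intermediate induced subgraph has chromatic number exactly $k$ and girth more than $g$, which proves \zcref{large_girth_large_chi}.
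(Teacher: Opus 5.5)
Your proof is correct. It is the classical Erd\H{o}s alteration argument in $G(n,p)$ with $p=n^{\theta-1}$: Markov's inequality for short cycles, a union bound for independent sets, deletion of one vertex per short cycle, and $\chi\ge\abs{V}/\alpha$. You then delete vertices one at a time to hit $\chi=k$ exactly, and your estimates check out. The separate treatment of $k\le 2$ is not even needed, since the trimming works for every $k\ge 1$.

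The paper does not prove \zcref{large_girth_large_chi}; it only cites it. The closest argument in the paper is the appendix proof of the stronger \zcref{thm:largegirthlargeHallratio}, which gives your statement with $h=k$ by a genuinely different route.
There the random graph is $h$-partite with parts of size $m$, and each vertex sends a constant number $\Lambda$ of uniformly random edges into every other part. So the average degree is bounded and the expected number of short cycles is of constant order; Markov's inequality cannot make it zero.
Deletion is avoided altogether. The asymmetric Lov\'asz Local Lemma gives probability bounded away from $0$ that there are no cycles of length at most $g$. A two-case union bound (sets meeting every part in at most $m/2$ vertices, versus sets containing more than half of some part) shows $\alpha(G)=m$ with high probability. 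Exactness is then structural rather than obtained by trimming: the $h$-partition gives $\chi\le h$, and $\abs{V}/\alpha=h$ gives $\chi\ge h$.

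Your route is more elementary: no Local Lemma and no delicate counting to pin $\alpha$ at exactly $m$. But both of your deletion steps lose all control of $\abs{V}/\alpha$ and $\chi_f$. After trimming you know $\chi=k$, yet nothing forces $\abs{V}/\alpha=\chi_f=k$. That equality is exactly what \zcref{thm:main_Hallratio} needs. It is why the paper keeps the balanced $h$-partite graph intact (deleting vertices from it would in general break $\abs{V}/\alpha=h$) and pays for this with the Local Lemma.
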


The idea of how to partition into comparability graphs is roughly based on Pach and Tomon's~\cite[Theorem 14]{PT2020} aforementioned construction of the union of 2 comparability graphs with chromatic number $\binom{\omega(G)+1}{2}$, which was probabilistic.
Instead of a probabilistic approach, we also use a trick of starting with a graph with large girth and fixed chromatic number. A somewhat similar trick to how we use these graphs was also pointed out by Ne{\v{s}}et{\v{r}}il  for a different purpose (see \cite{girao2024induced}).
Our construction is explicit and it can also be modified so that for any positive integer~$h$, $G$ contains no hole of length at most $h$.

\begin{proof}[Proof of~\zcref{thm:main}]
    We may assume $k\ge2$.
    By~\zcref{large_girth_large_chi}, there is a graph $G_0$ with girth at least $3k-2$ and chromatic number equal to $k^d$.
    We let $\phi: V(G_0) \to [k]^d$ be a fixed $k^d$-coloring of~$G_0$.
    For each $1\le r \le d$, let $(V(G_0),\preceq_r)$ be the partial order such that $u \preceq_r v$ if $G_0$ has a path $p_1\cdots p_s$ for $s\ge1$ with $p_1=u$, $p_s=v$ such that for every $1< i \le s$, the first $r-1$ coordinates of $\phi(p_{i-1})$ and $\phi(p_{i})$ are equal and the $r$-th coordinate of $\phi(p_i)$ is strictly greater than the $r$\nobreakdash-th coordinate of $\phi(p_{i-1})$.
    Note that such a path $p_1\cdots p_s$ forms a chain of $(V(G_0),\preceq_r)$, and that $s\le k$.
    Let $H_r$ be the comparability graph of $(V(G_0),\preceq_r)$.
    Then, $H_r$ has clique number at most $k$ since $(V(G_0),\preceq_r)$ clearly has no chain of length $k+1$.
    Since comparability graphs are perfect, we have that $\chi(H_r)=\omega(H_r)\le k$.
    Let $G=H_1 \cup \cdots \cup H_d$.
    Then by considering a product coloring, we have that $\chi(G) \le \omega(H_1)\cdots\omega(H_d)\le k^d$.
    For every edge~$uv$ of~$G_0$, we have that $u\prec_r v$ 
    or~$v\prec_r u$
    for some unique $1\le r\le d$, and in particular that $uv$ is an edge of~$H_r$.
    Therefore, $G_0$ is a subgraph of~$G$, and so $\chi(G)\ge \chi(G_0) =k^d$.
    Hence, $\chi(G)=k^d$.
    Since $\omega(H_1),\ldots , \omega(H_d)\le k$ and $k^d=\chi(G) \le \omega(H_1)\cdots\omega(H_d)$, it follows that $\omega(H_1) = \cdots = \omega(H_d) = k$, and therefore that $\omega(G)\ge k$.

    It remains to show that $\omega(G)\le k$.
    Suppose for sake of contradiction that $\omega(G)>k$.
    Then~$G$ contains a triangle on vertices $u,v,w$ such that the edges $uv,uw,vw$ are not all contained in a single~$H_r$.
    In particular, this implies without loss of generality that there exist $1\le a,b,c \le d$ with $a,b,c$ not all equal such that $u\prec_a v$, 
    $v \prec_b w$,
    and either 
    $u\prec_c w$
    or 
    $w\prec_c u$.
    Therefore, $G_0$ has paths $x_1\cdots x_{s_1}$, $y_1\cdots y_{s_2}$, $z_1\cdots z_{s_3}$ such that $x_1=u=z_{s_3}$, $x_{s_1}=v=y_1$, $y_{s_2}=w=z_1$, 
    $x_1 \prec_a \cdots \prec_a x_{s_1}$, $y_1 \prec_b \cdots \prec_b y_{s_2}$, 
    and 
    either 
    $z_1 \prec_c  \cdots \prec_c z_{s_3}$
    or 
    $z_{s_3} \prec_c  \cdots \prec_c z_{1}$.
    The ends of every edge of $G_0$ are comparable in exactly one of the partial orders ${(V(G_0),\preceq_1)}, \ldots , (V(G_0),\preceq_d)$.
    Since $a,b,c$ are not all equal, it therefore follows that one of the paths $x_1\cdots x_{s_1}$, $y_1\cdots y_{s_2}$, $z_1\cdots z_{s_3}$ is edge-disjoint from the other two.
    As $s_1,s_2,s_3 \le k$, it therefore follows that the union of these three paths contains a cycle of length at most $3k-3$.
    Since this cycle is contained in $G_0$, this contradicts our choice of~$G_0$ having girth at least $3k-2$.
    Hence $\omega(G)=k$, as desired.
\end{proof}

To prove \zcref{thm:main_Hallratio}, we use the following strengthening of
\zcref{large_girth_large_chi}, due to Lin and Zhu~\cite{LZ06}. 
Although their result explicitly states that $\chi_f(G)=\chi(G)=h$, the construction in their proof also yields $\frac{\abs{V(G)}}{\alpha(G)}=h$.
For completeness, we include an alternative proof in the appendix.
\begin{theorem}[store=chif]\label{thm:largegirthlargeHallratio}
    For every pair of positive integers $g$ and $h$, there exists a graph $H=H_{g, h}$ such that the girth of $H$ is more than $g$ and $\frac{\lvert V(H) \rvert}{\alpha(H)} = \chi_f(H) = \chi(H) = h$.
\end{theorem}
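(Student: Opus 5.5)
Since $\frac{|V(H)|}{\alpha(H)}\le\chi_f(H)\le\chi(H)$ always holds, it suffices to construct $H$ with girth more than $g$, $\chi(H)\le h$, and $\alpha(H)\le |V(H)|/h$. The plan is to build $H$ as a balanced $h$-partite graph: take $h$ disjoint classes $V_1,\dots,V_h$, each of size $n$, and put edges only between different classes. Then the partition is a proper $h$-colouring, so $\chi(H)\le h$. The whole task is to make $\alpha(H)=n$ exactly, which gives $|V(H)|/\alpha(H)=hn/n=h$ and forces every inequality in the chain to be an equality.

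To get $\alpha(H)=n$, I would proceed probabilistically with alterations, in the Erd\H{o}s style. Between each pair $V_i,V_j$ place a random bipartite graph with edge probability $p=n^{\theta-1}$ for a small $\theta<1/g$. The expected number of cycles of length at most $g$ is then $O(n^{\theta g})=o(n)$. Meanwhile, any set $S$ with $|S|\ge \epsilon n$ in two different classes spans many edges with high probability. I cannot simply delete vertices, since that would unbalance the classes and destroy the exact ratio. Instead, I would delete one edge from each short cycle. Deleting edges keeps the $h$-partition balanced, but it may create larger independent sets, so I need the deleted edges to be few and spread out. This is the main obstacle: the independent-set bound must be exact, not approximate.

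To handle exactness, I would show that any independent set of size larger than $n$ must meet at least two classes in sets $A\subseteq V_i$ and $B\subseteq V_j$ with $|A|+|B|>n$, up to a negligible amount. One could also try to force a stronger structure, but a cleaner route may be the following. Take the Lin--Zhu style construction: start with a graph of large girth and suitable density, and blow up or use a vertex-transitive construction. For a vertex-transitive graph $H$, we have $\chi_f(H)=|V(H)|/\alpha(H)$ automatically. So I would aim for a vertex-transitive $H$, for example a Cayley graph, with girth greater than $g$, $\chi(H)=h$, and $|V(H)|/\alpha(H)=h$. Vertex-transitivity then gives $\chi_f=|V|/\alpha$. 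It remains to have an $h$-colouring whose colour classes are maximum independent sets, which is equivalent to $\chi=|V|/\alpha$.

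Concretely, I would take a Cayley graph on $\mathbb{Z}_h\times\Gamma$ with connection set contained in $(\mathbb{Z}_h\setminus\{0\})\times\Gamma$. Then the colouring by the first coordinate is proper and balanced, with classes of size $|\Gamma|$. I would choose $\Gamma$ and the generators from a high-girth Cayley graph, for instance the Lubotzky--Phillips--Sarnak graphs or random generators in a large group, so that girth exceeds $g$. The key step, and the expected hardest one, is proving $\alpha\le|\Gamma|$. I would try to get this from spectral bounds (Hoffman's ratio bound $\alpha\le |V|\cdot\frac{-\lambda_{\min}}{\lambda_{\max}-\lambda_{\min}}$). This requires $\lambda_{\min}=-\lambda_{\max}/(h-1)$, as for complete $h$-partite type spectra. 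That is achievable if the graph is a "lift" whose spectrum combines that of $K_h$, contributing the eigenvalue $-d/(h-1)$, with expander eigenvalues of small modulus. Making the nontrivial eigenvalues at most $d/(h-1)$ in absolute value, via Ramanujan-quality expansion, then gives $\alpha=|V|/h$ exactly.
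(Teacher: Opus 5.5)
\paragraph{The reduction is right; the exact bound on $\alpha$ is missing.} Your reduction matches the paper's. A balanced $h$-partite graph with girth more than $g$ and $\alpha=n$ collapses the whole chain to $h$, and vertex-transitivity is not needed for this. All the content lies in proving $\alpha\le n$ \emph{exactly} while keeping the girth, and neither of your routes does this. With $p=n^{\theta-1}$ there are in expectation $\Theta(n^{\theta g})\to\infty$ short cycles, so you must alter the graph, and then you leave the exact bound open. ``Up to a negligible amount'' is precisely what an exact statement cannot absorb.

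\paragraph{How the paper gets exactness.} The key idea is to work at bounded degree. Each vertex $(i,x)$ sends $\Lambda$ independent uniformly random edges into each other class $V_j$, where $\Lambda$ depends only on $h$. Then the expected number of cycles of length at most $g$ is $O(1)$, and the Lov\'asz local lemma gives a probability $c>0$, independent of $m$, that there are none; no deletion is needed at all. Meanwhile $\alpha=m$ holds with probability $1-o(1)$ by a union bound over sets $A$ of size $m+1$, in two cases.
\begin{itemize}
\item If every $a_i\le m/2$, then $\sum_{i\ne j}a_ia_j>m^2/2$, and the bound is $2^{hm}e^{-\Lambda m/2}$.
\item If $a_1>m/2$, then all $\Lambda$ random edges from each of the $m-a_1+1$ vertices of $A$ outside $V_1$ into $V_1$ must land in the $m-a_1$ vertices of $V_1\setminus A_1$. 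Summing $\binom{m}{a_1}\binom{(h-1)m}{m-a_1+1}\bigl(\frac{m-a_1}{m}\bigr)^{\Lambda(m-a_1+1)}$ over $a_1$ gives $O(m^{1-\Lambda})$.
\end{itemize}
The second case, almost all of one class plus a few extra vertices, is exactly the near-extremal configuration your sketch does not handle. Your dense route could be repaired. For $\theta<1/(2g+1)$, with high probability no subgraph on at most $2g$ vertices has more edges than vertices. So short cycles are vertex-disjoint, each vertex loses at most one edge, and a robust version of the same two-case count goes through. None of this is in the proposal.

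\paragraph{The spectral route has a genuine gap.} Let $S\subseteq(\mathbb{Z}_h\setminus\{0\})\times\Gamma$ and $\zeta=e^{2\pi\sqrt{-1}/h}$. On the component indexed by a character $a\mapsto\zeta^{ja}$ and a nontrivial irreducible $\rho$ of $\Gamma$, the eigenvalues of $\mathrm{Cay}(\mathbb{Z}_h\times\Gamma,S)$ are those of the twisted operator $\sum_{(a,\gamma)\in S}\zeta^{ja}\rho(\gamma)$. Hoffman's bound needs all of them to be at least $-d/(h-1)$. Neither high girth nor the Ramanujan property of $\mathrm{Cay}(\Gamma,T)$ (as for LPS graphs) controls these operators.

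The natural choice that makes the spectrum factor is $S=(\mathbb{Z}_h\setminus\{0\})\times T$, giving $K_h\times\mathrm{Cay}(\Gamma,T)$. This destroys the girth: for any edge $xy$ it contains the $4$-cycle $(1,x)(2,y)(3,x)(4,y)$ if $h\ge4$, and the $6$-cycle $(1,x)(2,y)(3,x)(1,y)(2,x)(3,y)$ if $h=3$.

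A girth-preserving choice is $S=\{(a(t),t):t\in T\}$ with $a(t^{-1})=-a(t)$ and each nonzero residue used equally often. This needs a genuinely twisted Ramanujan-type bound. It also needs $d$ at least about $4(h-1)^2$, because large girth forces $\lambda_{\min}\le-2\sqrt{d-1}+o(1)$ by interlacing with an induced ball, which is a tree. Bounds of this kind are available for random lifts through substantial results on their new eigenvalues (e.g.\ Bordenave--Collins). That would give a correct but far heavier probabilistic proof. An explicit version would answer the open problem the paper states right after this theorem. As written, the decisive step of your second route is an unsupported assertion.
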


\begin{proof}[Proof of \zcref{thm:main_Hallratio}]    
    Let $G_0 = H_{3k-2, k^d}$ be the graph given by \zcref{thm:largegirthlargeHallratio}.
    Since adding edges does not decrease the inverse independence ratio, we obtain a graph $G$  that is the union of $d$ comparability graphs 
    such that $\omega(G) = k$ and $\frac{\abs{V(G)}}{\alpha(G)} = k^d$
    by the proof of~\zcref{thm:main} starting with $G_0$.
\end{proof}

We leave as an open problem to find a constructive proof for \zcref{thm:largegirthlargeHallratio}. 
This would immediately yield a construction for \zcref{thm:main_Hallratio} by substituting the construction from \zcref{thm:largegirthlargeHallratio} into the proof of \zcref{thm:main_Hallratio}.

\subsection*{Tool and computational resource disclosure}
The authors used ChatGPT 5.5 Pro to find a proof of Pach and Tomon~\cite[Theorem 14]{PT2020} without using the probabilistic method, which allowed the authors to simplify the proof in the present paper.

\bibliographystyle{amsplain}
{\providecommand{\bysame}{\leavevmode\hbox to3em{\hrulefill}\thinspace}
\providecommand{\MR}{\relax\ifhmode\unskip\space\fi MR }
\providecommand{\MRhref}[2]{\href{http://www.ams.org/mathscinet-getitem?mr=#1}{#2}
}
\providecommand{\href}[2]{#2}

}

\appendix
\section{Graphs of large girth and prescribed Hall ratio}\label{sec:largegirthlargeHallratio}

We present a direct proof of \zcref{thm:largegirthlargeHallratio}.
Let us denote by $\operatorname{girth}(G)$ the girth of $G$.
We will prove the following lemma.

\begin{lemma}\label{lem:partite}
Let $h \ge 3$ and $g \ge 1$ be fixed integers. 
Then there exist an integer $m \geq 1$ and an $h$-partite graph $G$ with parts $V_1, \ldots, V_h$, all of the same size $m$, such that the girth of $G$ is more than $g$ and $\alpha(G)=m$.
\end{lemma}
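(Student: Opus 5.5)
We will build $G$ by the probabilistic method with alterations, keeping track of the parts so they all stay the same size at the end.

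Fix a large $n$ and put $h$ disjoint sets $W_1,\dots,W_h$ of size $n$. Between each pair $W_i,W_j$ with $i\ne j$, include each edge independently with probability $p=n^{\theta-1}$, where $0<\theta<1/g$ is fixed. The number of cycles of length at most $g$ has expectation $\sum_{\ell\le g} O((hn)^\ell p^\ell)=O(n^{\theta g})=o(n)$. So with probability at least $1/2$ there are at most $o(n)$ short cycles. We remove one vertex from each short cycle, and then delete further vertices so that every part has exactly the same size $m\ge n-o(n)$. Deleting vertices cannot create cycles, so the resulting graph $G$ has girth more than $g$.

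The key property needed is that every independent set of the random graph meets all parts \emph{in a balanced way only trivially}. Concretely, we want that whp every independent set $I$ in $G_{\mathrm{rand}}$ satisfies $\sum_i |I\cap W_i| \le n + o(n)$. To get this, fix $\varepsilon>0$. Any independent set with total size at least $(1+\varepsilon)n$ must have two parts $i\ne j$ with $|I\cap W_i|,|I\cap W_j|\ge \varepsilon n/h$, since otherwise at most one part contributes more than $\varepsilon n/h$ and the total is below $n+\varepsilon n$. So it suffices that whp no pair of sets $A\subseteq W_i$, $B\subseteq W_j$ with $|A|,|B|\ge \delta n$ (where $\delta=\varepsilon/h$) spans no edge. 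The union bound gives at most $4^{n}h^2$ pairs, each empty with probability $(1-p)^{\delta^2n^2}\le e^{-\delta^2 n^{1+\theta}}$. This is $o(1)$. So whp $\alpha$ of the random graph is at most $(1+\varepsilon)n$, and hence $\alpha(G)\le(1+\varepsilon)n$ as well.

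The main obstacle is that this argument only yields $\alpha(G)\le (1+\varepsilon)m'$-type bounds, whereas the lemma demands exactly $\alpha(G)=m$. Each part is independent, so $\alpha(G)\ge m$ is automatic; the difficulty is the upper bound. To close the gap I would strengthen the counting as follows. An independent set larger than $m$ must take some vertices outside its largest part $W_i$, say a set $B$ of size $t$ in the other parts, and then it misses the set $N(B)\cap W_i$. So I want $|N(B)\cap W_i|>t$, and more precisely $\bigl|N(B)\cap W_i\bigr| \ge |B|$ with strict loss, uniformly over all $B$. This is a Hall/expansion condition, and it must hold even after the alterations.

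For large $t\ge\delta n$ the previous argument already shows that $I\cap W_i$ is tiny compared with $m$. For small $t$, expansion of random bipartite graphs with average degree $n^\theta\to\infty$ gives $|N(B)\cap W_i|\ge \min(\tfrac12 n^{\theta}t,\ n/2)$ whp; this follows by the standard union bound over $B$ and $N(B)$. Deleting $o(n)$ vertices during the alterations could destroy the expansion only near deleted vertices. I would handle this by deleting only vertices of short cycles, of which there are $O(n^{\theta g})\ll n^{\theta}$ per... — more carefully, by ensuring the degrees into each part remain $\ge n^\theta/4$ after deletion, for instance by first discarding vertices of low degree and then rechecking. Combining the two regimes gives: if $I\cap W_i$ is the largest part and $B=I\setminus W_i\neq\emptyset$, then $|I|\le m-|N(B)\cap W_i|+|B|<m$. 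This yields $\alpha(G)=m$.
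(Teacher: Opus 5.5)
You correctly identify the difficulty, but the step that is supposed to resolve it is only sketched, and it is the heart of the lemma. Girth $>g$ together with $\alpha(G)\le(1+\varepsilon)m$ is the classical Erd\H{o}s alteration argument. By contrast, $\alpha(G)=m$ exactly is equivalent to the Hall-type condition $\lvert N_G(B)\cap V_i\rvert\ge\lvert B\rvert$ for every $i$ and every independent $B\subseteq V(G)\setminus V_i$. Already for $\lvert B\rvert=1$ this says every surviving vertex keeps a neighbour in every other surviving part. Unlike the $(1+\varepsilon)$ bound, this condition is not inherited under vertex deletion.

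Your estimates establish the condition for the random graph before the alterations, but you never transfer it to $G$:
\begin{itemize}
\item The comparison $O(n^{\theta g})\ll n^{\theta}$ that you begin is false for $g\ge 2$. The deleted set $D_i\subseteq W_i$ is much larger than a neighbourhood, so the bound $\lvert N(B)\cap W_i\rvert-\lvert D_i\rvert\ge\tfrac{1}{2}n^{\theta}t-\lvert D_i\rvert$ only helps once $t\ge Cn^{\theta(g-1)}$.
\item The fallback (discard vertices whose degree into some part drops below $n^\theta/4$, then ``recheck'') is not carried out. You give no argument that the pruning cascade stays of size $o(n)$. Nor do you reconcile it with re-equalising the parts: each pruning round unbalances them, and each equalising deletion lowers degrees again.
\item Even granting minimum degree $n^\theta/4$ after all deletions, combined with your expansion constant $\tfrac{1}{2}$ this does not give the condition for $n^\theta/4<t<Cn^{\theta(g-1)}$. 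A vertex of original degree about $n^\theta$ may have lost $\tfrac{3}{4}n^{\theta}$ neighbours in $W_i$.
\end{itemize}
These holes can plausibly be filled with extra random-graph lemmas: sharp expansion $(1-\varepsilon)n^\theta t$ for $t\le\varepsilon n^{1-\theta}$ plus a maximum-degree bound, a sparse-subgraph argument bounding the pruning, and equalising deletions chosen so that no vertex loses more than one neighbour. As written, however, the upper bound $\alpha(G)\le m$ is not proved.

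The paper avoids the issue by making no deletions at all. It uses a bounded-degree model: each vertex $(i,x)$ sends $\Lambda$ independent uniform edges into each other part, with $\Lambda$ a constant depending only on $h$. The expected number of cycles of length at most $g$ is then $O(1)$. The asymmetric Lov\'asz local lemma gives girth $>g$ with probability at least a constant $c>0$ independent of $m$, and the parts keep size exactly $m$.

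The event $\alpha(G)>m$ is handled by a first-moment computation on the untouched random graph. In the balanced case, $\sum_{i\ne j}a_ia_j>m^2/2$ beats the $2^{hm}$ choices of $A$. In the unbalanced case $a_1>m/2$, all $\Lambda$ edges that each of the $a-a_1$ chosen vertices outside $V_1$ sends into $V_1$ must land in $V_1\setminus A_1$, giving the factor $\bigl((m-a_1)/m\bigr)^{\Lambda(a-a_1)}$. This is your Hall condition, built into the model rather than restored after alterations, and its $o(1)$ failure probability is beaten by $c$.

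Bounded degree is essential here. In your model with $p=n^{\theta-1}$ and $g\ge 3$, the local lemma could only guarantee probability about $\exp(-\Theta(n^{\theta g}))$ of having no short cycles. That is far below the probability $e^{-(1+o(1))n^\theta}$ that a fixed vertex has no neighbour in a fixed other part, so in your model the deletions cannot simply be replaced by the local lemma.
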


Assuming \zcref{lem:partite}, we can prove \zcref{thm:largegirthlargeHallratio}.
\begin{proof}[Proof of~\zcref{thm:largegirthlargeHallratio} assuming \zcref{lem:partite}]
    Let $H_{g,1}=K_1$ and let $H_{g,2}$ be an even cycle of length greater than $g$. 
    For $h\ge 3$, by~\zcref{lem:partite}, we can choose $H=H_{g,h}$ to be an $h$-partite graph such that all its parts have size $m$, its girth is greater than $g$, and $\alpha(H)=m$. 
    Then we have
    $h=\abs{V(H)}/\alpha(H) \leq \chi_f(H) \le \chi(H) \le h$.
    This proves~\zcref{thm:largegirthlargeHallratio}.
\end{proof}
\begin{proof}[Proof of \zcref{lem:partite}]
    Fix an integer $\Lambda$ so that $e^\Lambda> 2e^{2h}$ and $2^{\Lambda}>4e^2h$.
    For a sufficiently large integer $m$ to be specified later, let $V_i=\{(i,x):x\in[m]\}$ for each $i \in [h]$.
    We will construct a random subgraph $G$ of the complete multipartite graph with parts $V_1, V_2, \ldots, V_h$, as follows. 
    For every ordered pair $(i, j)$ such that $i, j \in [h]$ and $i \neq j$, every $\lambda\in[\Lambda]$, and every $x\in[m]$, choose $f_{ij}^\lambda(x)\in[m]$ independently and uniformly, add a directed edge from $(i,x)$ to $(j,f_{ij}^{\lambda}(x))$, and label the directed edge with $\lambda$. 
    In other words, we draw $\Lambda$ random labelled directed edges from $(i,x)$ to $V_j$, where parallel edges are allowed. 
    This yields a directed multigraph $\Gamma$. 
    After this, construct $G$ from $\Gamma$ by putting an undirected edge between two vertices whenever at least one labelled directed edge joins them, ignoring multiple edges. This yields a simple $h$-partite graph $G$ on $mh$ vertices.

    We first show that, with probability bounded below by a positive constant depending only on $h$, $\Lambda$, and $g$, there is no cycle of length at most $g$ in $G$.
    For $3\le \ell\le g$, consider a potential labelled oriented cycle of length $\ell$, by which we mean $\ell$ distinct vertices $(i_1,x_1),\ldots,(i_\ell,x_\ell)$ with $i_t\ne i_{t+1}$ cyclically, together with one of the two directions and a label $\lambda_t\in[\Lambda]$ between each consecutive pair of vertices. 
    A \emph{bad event} is that all these labelled directed edges occur in $\Gamma$. 
    Observe that every actual cycle of $G$ of length at most $g$ originates from at least one such bad event.

    Let $B$ be a fixed potential labelled oriented cycle of length $\ell$ in $\Gamma$.
    Since $B$ involves $\ell$ distinct random variables, it holds that $\mathbb{P}(B) \leq m^{-\ell}$.
    In addition, the number of potential labelled oriented cycles of length $\ell$ is at most a constant multiple of $m^{\ell}$.
    Indeed, to specify such a potential labelled cycle, we have at most $h^{\ell}$ choices for the $\ell$ parts, at most $m^{\ell}$ choices for the vertices $x_1,\ldots, x_{\ell}$, and $2 \Lambda$ choices for the direction and the label for each consecutive pair. 
    This shows that there are at most $h^\ell m^\ell(2\Lambda)^\ell=O_{h,\Lambda,\ell}(m^{\ell})$ potential labelled oriented cycles of length $\ell$ in $G$.

    We use the standard asymmetric Lov\'asz local lemma (see, for example,~\cite[Chapter~5]{AlonSpencer}).
    Let~$\mathcal{B}$ be a family of bad events with a dependency graph.
    If there are numbers $0<x_B<1$ such that $\mathbb{P}(B)\le x_B\prod_{B'\sim B}(1-x_{B'})$ for every $B \in \mathcal{B}$, then $\mathbb{P}(\bigcap_{B \in \mathcal{B}} \overline B)\ge \prod_B(1-x_B)$.

    For a bad event $B$ of length $\ell$, set $x_B=2m^{-\ell}$. 
    Two bad events are dependent only if they use a common random variable $f_{ij}^\lambda(x)$. 
    A fixed bad event shares a random variable with only $O_{h,\Lambda,g}(m^{r-1})$ bad events of length $r$, and therefore $\sum_{B'\sim B}x_{B'}=\sum_{r=3}^{g}  O_{h,\Lambda,g}(m^{r-1}) \cdot 2 m^{-r}  =O_{h,\Lambda,g}(m^{-1})$. 
    Thus, for all sufficiently large $m$, $\prod_{B'\sim B}(1-x_{B'})\ge 1- \sum_{B'\sim B}x_{B'}\ge 1/2$. Hence $\mathbb{P}(B)\le m^{-\ell}\le x_B\prod_{B'\sim B}(1-x_{B'})$, so the conditions of the local lemma are satisfied. 
    Since $\sum_B x_B=O_{h,\Lambda,g}(1)$ and $\max_B x_B=o(1)$, the local lemma gives $\mathbb{P}(\bigcap_B \overline B)\ge\prod_B(1-x_B)\ge \prod_B e^{-2x_B}>0$, and hence $\mathbb{P}(\operatorname{girth}(G)>g)\ge c >0$, for all sufficiently large $m$. 
    Here, $c$ is some constant independent of $m$.

    It remains to show that $\mathbb{P}(\alpha(G)>m)\to 0$ as $m\to \infty$. 
    Let $A_i\subseteq V_i$, put $a_i=\abs{A_i}$, and set $A=A_1\cup\cdots\cup A_h$ and $a=a_1+\cdots+a_h$. 
    Suppose $a=m+1$.
    For fixed $A_1,\ldots,A_h$, independence of $A$ requires every directed edge from $A_i$ to $V_j$ to avoid $A_j$, for all $i\ne j$, and hence $\mathbb{P}(A\text{ is independent})=\prod_{i\ne j}(1-a_j/m)^{\Lambda a_i}  \le \prod_{i\neq j}\exp(-\frac{a_j}{m} \cdot \Lambda a_i)=\exp(-\frac{\Lambda}{m} \sum_{i\neq j} a_ia_j)$.

    First, assume $a_i\le m/2$ for every $i$. 
    We have $\sum_i a_i^2\le \frac{m}{2}\sum_i a_i=\frac{ma}{2}$, and hence $\sum_{i\ne j}a_ia_j=a^2-\sum_i a_i^2\ge a^2-\frac{ma}{2} >\frac{m^2}{2}$. 
    Therefore, $\mathbb{P}(A\text{ is independent})\le \exp(-\frac{\Lambda m}{2})$. 
    Since there are at most $2^{hm}$ choices for $A$, the probability of having an independent set $A$ with $\abs{A}=m+1$ and $\abs{A\cap V_i}\le m/2$ for all $i$ is at most $2^{hm}\exp(-\frac{\Lambda m}{2})=o(1)$, by our choice of $\Lambda$.

    Now, assume instead that $a_i>m/2$ for some $i$. 
    By symmetry it suffices to handle the case $a_1>m/2$.
    If $A$ is independent, every random edge directed from a selected vertex outside $V_1$ into $V_1$ must land in the complement $V_1\setminus A_1$, which has size $m-a_1$. 
    Hence, $\mathbb{P}(A\text{ is independent})\le (\frac{m-a_1}{m})^{\Lambda(a-a_1)}$. 
    The case $a_1=m$ has probability $0$, so we can ignore it from now on.
    Fix $\frac{m}{2}< a_1<m$. 
    There are $\binom m{a_1}$ choices for $A_1$. 
    Summing over all $\binom{(h-1)m}{a-a_1}$ possible choices of the remaining $a-a_1$ vertices outside $V_1$
    gives an upper bound
    \begin{equation}\label{eq:indepsetupperbound}
    \binom m{a_1}\binom{(h-1)m}{a-a_1}\left(\frac{m-a_1}{m}\right)^{\Lambda(a-a_1)}
    \end{equation}
    on the probability that an independent set~$A$ with $\abs{A\cap V_1}=a_1$ and $\abs{A}=m+1$ exists.
    Now, we deduce that 
    \begin{align*}
        \binom{m}{a_1}\binom{(h-1)m}{a-a_1} \left(\frac{m-a_1}{m}\right)^{\Lambda(a-a_1)}
        &\le \left(\frac{em}{m-a_1}\right)^{m-a_1}\left(\frac{e(h-1)m}{a-a_1}\right)^{a-a_1} \left(\frac{m-a_1}{m}\right)^{\Lambda(a-a_1)}\\
        &\leq \left(\frac{em}{m-a_1}\right)^{m-a_1}\left(\frac{ehm}{m-a_1}\right)^{a-a_1} \left(\frac{m-a_1}{m}\right)^{\Lambda(a-a_1)}\\ 
        &= \left(\frac{em}{m-a_1}\right)^{m-a_1}\left(\frac{ehm}{m-a_1}\right)^{m-a_1+1} \left(\frac{m-a_1}{m}\right)^{\Lambda(m-a_1+1)}\\ 
        &=  eh        (e^2h)^{m-a_1}  \left(\frac{m-a_1}{m}\right)^{(\Lambda-2)(m-a_1)} 
        \left(\frac{m-a_1}{m}\right)^{\Lambda-1}\\ 
        &\leq  eh        (e^2h)^{m-a_1}  \left(\frac12\right)^{(\Lambda-2)(m-a_1)} 
        \left(\frac{m-a_1}{m}\right)^{\Lambda-1}.\\ 
    \end{align*}
    Summing over all $m/2<a_1<m$ gives that the probability of having an independent set $A$ of size $m+1$  with $\abs{A\cap V_1}>m/2$ is at most
    \begin{equation}\label{eq:indepsetupperbound2}
        ehm^{1-\Lambda}\sum_{m/2<a_1<m}(m-a_1)^{\Lambda-1}(e^2h2^{2-\Lambda})^{m-a_1}.       
    \end{equation}
    Let $\theta = e^2 h2^{2-\Lambda}$.
    Then $\theta < 1$ by the choice of $\Lambda$, so the sum over $a_1$ in~\eqref{eq:indepsetupperbound2} is bounded from above by the convergent series $\sum_{k\ge1}k^{\Lambda-1}\theta^k$.
    Thus, the quantity in~\eqref{eq:indepsetupperbound2} is $O_{h,\Lambda}(m^{1-\Lambda})$, and hence tends to $0$ as $m \to \infty$.
    Multiplying by $h$ to account for the $h$ possible choices of an index $i$ for which $a_i>m/2$, it follows that the probability of having an independent set of size $m+1$ with $a_i>m/2$ for some $i$ is~$o(1)$. 
    Thus, with probability tending to $1$, there is no independent set of size greater than $m$. 
    Since each part $V_i$ is independent, we have $\alpha(G)\ge m$, and therefore $\mathbb{P}(\alpha(G)=m)\to 1$ as $m\to \infty$.

    Combining the estimates, for sufficiently large $m$, we have $\mathbb{P}(\operatorname{girth}(G)>g\text{ and }\alpha(G)=m)\ge c-o(1)>0$. 
    Thus, for every sufficiently large $m$, there is a desired $h$-partite graph $G$.
\end{proof}

\end{document}